\theoremstyle{plain}
\newtheorem{thm}{Theorem}
\newtheorem{lem}{Lemma}
\newtheorem*{thmBG}{Theorem BG}
\DeclareMathOperator{\Ex}{{\bf E}}
\DeclareMathOperator{\Var}{{\bf Var}}
\def\boldhline{\noalign{\global\arrayrulewidth.8pt}\hline\noalign{\global\arrayrulewidth.4pt}}
\newcommand{\iid}{\mbox{i.i.d.\frenchspacing}}
\newcommand{\al}{\mbox{al.\frenchspacing}}
\begin{document}

\LARGE
\begin{center}
\textbf {An Edgeworth expansion for finite population $L$-statistics}\\[2.25\baselineskip]

\small
\text{Andrius {\v C}iginas}\\

\medskip

{\footnotesize
Faculty of Mathematics and Informatics, Vilnius University, LT-03225 Vilnius, Lithuania
}\\[2.25\baselineskip]
\end{center}

\small
\begin{abstract}

   In this paper, we consider the one-term Edgeworth expansion for
   finite population $L$-statistics. We provide an explicit formula for the Edgeworth correction term
   and give sufficient conditions for the validity of the expansion which are expressed in terms of the weight function
   that defines the statistics and moment conditions.
   
\end{abstract}
\vskip 2mm

\normalsize

\noindent\textbf{Keywords:} finite population, sampling without replacement, $L$-statistic, Hoeffding decomposition, Edgeworth expansion

\vskip 2mm

\noindent\textbf{MSC classes:} 62E20

\section{Introduction}\label{s:1}

Consider a population ${\cal X}=\{x_1,\dots, x_N\}$ of size $N$. We assume, without loss of generality, that $x_1 \leq \dots \leq x_N$. Let $\mathbb X=\{X_1, \dots, X_n\}$ be a simple random sample of size $n<N$ {\it drawn without replacement} from $\cal X$. Let $X_{1:n}\leq \dots \leq X_{n:n}$ be the order statistics of $\mathbb X$. Let $c_1, \dots, c_n$ be a sequence of real numbers. Define the $L$-statistic 
$$
L_n=L_n(\mathbb X)=\frac{1}{n} \sum_{j=1}^n c_j X_{j:n}.
$$
Note that $L_n$ is a symmetric statistic, i.e., for every permutation of $\mathbb X$ the value of $L_n$ remains the same.

In the case where random variables $X_1, \dots ,X_n$ are independent and identically distributed (i.i.d.),
$L$-statistics were studied by a number of authors. 
Asymptotic normality under various conditions was shown in
Chernoff et \al{} \cite{ChGJ_1967}, Shorack \cite{Sh_1972} and Stigler \cite{S_1974}, among others.
Berry-Esseen bounds were obtained in Bjerve \cite{B_1977}, Helmers \cite{H_1977}, and others. 
Later such bounds were generalized to symmetric statistics in van Zwet \cite{Z_1984}.
Large deviations for $L$-statistics were considered in Bentkus and Zitikis \cite{BZ_1990}.
Edgeworth expansions for $L$-statistics were established in Helmers \cite{H_1980}, Bentkus et \al{} \cite{BGZ_1997}, see also Putter \cite{P_1994}.
A general second order asymptotic theory of symmetric asymptotically linear statistics was developed in \cite{BGZ_1997}. Empirical Edgeworth expansions for symmetric statistics were considered in Putter and van Zwet \cite{PZ_1998}.

In the case of samples {\it drawn without replacement} from finite population we know considerably less about $L$-statistics.
As to issues on the asymptotic normality of $L$-statistics we refer to Shao \cite{Sh_1994}.
The one-term Edgeworth expansion for general asymptotically linear symmetric statistics, based on samples drawn
without replacement, is obtained in Bloznelis and G{\" o}tze \cite{BG_2001}. Empirical Edgeworth expansions 
for symmetric finite population statistics are constructed in Bloznelis \cite{B_2001}. 
 
Our analysis is based on the Hoeffding decomposition
\begin{equation}\label{P1}
L_n=\Ex L_n+U_1+\dots +U_n  
\end{equation}
where
\begin{equation*}\label{P2}
U_m=\sum_{1 \leq i_1< \dots <i_m \leq n} g_m(X_{i_1}, \dots ,X_{i_m}), \qquad m=1,\dots ,n.  
\end{equation*}
Here symmetric and centered kernels $g_m, m=1,\dots ,n$ are certain linear combinations of conditional expectations 
\begin{equation*}\label{P3}
h_j(x_{k_1},\dots , x_{k_j})=\Ex \left(L_n-\Ex L_n \,\middle|\, X_1=x_{k_1},\dots ,X_j=x_{k_j}\right),
\quad 1 \leq j \leq m,
\end{equation*}
such that $U_m$ are mutually uncorrelated.
The decomposition in (\ref{P1}) is also called an orthogonal decomposition of $L_n$.
For $U$-statistics, based on \iid{} observations, the orthogonal decomposition was introduced in Hoeffding \cite{H_1948}. In the case of samples drawn without replacement, the orthogonal decompositions of $U$-statistics of the fixed degree $m$ were studied in Zhao and Chen \cite{ZhCh_1990}. In the general case of symmetric statistics (including $L$-statistics), based on samples drawn without replacement, the orthogonal decomposition was studied in Bloznelis and G{\" o}tze \cite{BG_2001}. In particular, \cite{BG_2001} provides expressions for the decomposition kernels as follows 
\begin{align}
\begin{split}
&g_1(x)=\frac{N-1}{N-n}h_1(x),
\end{split}\label{P4} \\
\begin{split}
&g_2(x,y)=\frac{N-2}{N-n}\frac{N-3}{N-n-1}\bigg(h_2(x,y)-\frac{N-1}{N-2}\big(h_1(x)+h_1(y)\big)\bigg),
\end{split}\label{P5} \\
\begin{split}
&g_3(x,y,z)=\frac{N-3}{N-n}\frac{N-4}{N-n-1}\frac{N-5}{N-n-2}\bigg(h_3(x,y,z)-
\frac{N-2}{N-4}\big(h_2(x,y)+h_2(x,z)+h_2(y,z)\big) \\ 
& \hspace{2.75in} +\frac{N-1}{N-3}\frac{N-2}{N-4}\big(h_1(x)+h_1(y)+h_1(z)\big)\bigg).
\end{split}\label{P5_b}
\end{align} 
Clearly $g_1(\cdot), g_2(\cdot, \cdot),\ldots$ depend on the weight sequence $c_1,\ldots ,c_n$ and the distribution
of $X_1$. The first issue, addressed in this paper, is to find a convenient expression of kernels $g_i$, $i=1,2,3$
in terms of the weights $c_1,\dots ,c_n$. For \iid{} samples such a problem is solved in Putter \cite{P_1994}, see also Putter and van Zwet \cite{PZ_1998}. Our purpose is to obtain similar formulas for samples drawn without replacement. Our result is given in Theorem \ref{thm:1} below. 

The second issue, addressed in this paper, is to find specific conditions for $c_1,\ldots ,c_n$ and the distribution
of $X_1$ that ensure the validity of the one-term Edgeworth expansion. This problem is important since it is difficult to check the general conditions given in Theorem 2 of Bloznelis and G{\" o}tze \cite{BG_2001}. 

Define differences of $c_j$ recursively by 
$$
\Delta^0(c_j)=c_j, \quad \Delta^1(c_j)=c_j-c_{j-1}, \quad \Delta^v(c_j)=\Delta^1(\Delta^{v-1}(c_j)),
\quad v=2,3,\ldots, n-1.
$$
Introduce the smoothness conditions for the weights of an $L$-statistic as follows: assume that for some constants $a, b, c$ and $d$ we have
\begin{equation}\label{icase}
\max_{1\leq j \leq n} \left| \Delta^0(c_j) \right| \leq a, \quad
n\max_{2\leq j \leq n} \left| \Delta^1(c_j) \right| \leq b, \quad
n^2\max_{3\leq j \leq n} \left| \Delta^2(c_j) \right| \leq c,
\end{equation}
\begin{equation}\label{iicase}
n^3\max_{4\leq j \leq n} \left| \Delta^3(c_j) \right| \leq d.
\end{equation}
How to check these conditions for arbitrary sample size $n$? The weights of many important $L$-statistics, such as sample mean, trimmed means and Gini's mean difference, may be written in the following way: there exists a function $J\colon(0,1) \to \mathbb R$ such that $c_j=J(j/(n+1))$, $j=1, \dots, n$. Then condition (\ref{icase}) is fulfilled if $J(\cdot)$ has a bounded second derivative. Similarly, boundedness of the third derivative of $J(\cdot)$ implies (\ref{iicase}). Thus, we restrict ourselves to the case of smooth weights. 
For the example of sample mean we have $J\equiv 1$, and, in the case of Gini's mean difference, the weights are of the form $c_j=(n+1)J(j/(n+1))/(n-1)$, $1\leq j\leq n$, with $J(u)=2(2u-1)$. Clearly, in these cases conditions (\ref{icase}) and (\ref{iicase}) are satisfied. A trimmed mean has the weight function $J(u)=(t_2-t_1)^{-1} \, \mathbb I\{ t_1<u<t_2\}$, where $0< t_1<t_2< 1$ are trimming parameters. Here $\mathbb I\{ \cdot\}$ is the indicator function. The smoothness conditions are not fulfilled for this example. For more examples of $L$-statistics with weights generated by the function $J(\cdot)$ we refer to Chernoff et \al{} \cite{ChGJ_1967}.

Consider the normalized $L$-statistic
\begin{equation}\label{Norm}
S_n=n^{\frac{1}{2}}(L_n-\Ex L_n).
\end{equation}
Write 
$$
\tilde{\sigma}_{n}^2=\Var S_n \quad \text{and} \quad \sigma_1^2=\Ex g_1^2(X_1).
$$
Denote
$$
\tau^2=Npq, \quad p=n/N, \quad q=1-p \quad \text{and} \quad n_{*}=\min\{n,N-n\}.
$$
We are going to apply the general result on Edgeworth expansions for symmetric finite population statistics from Bloznelis and G{\" o}tze \cite{BG_2001}.
It follows from Theorem 2 of \cite{BG_2001} that
\begin{equation*}\label{edge_expa}
G_n(x)=\Phi(x)-\frac{(q-p)\alpha+3\kappa}{6\tau}\Phi^{(3)}(x)
\end{equation*}
provides the one-term Edgeworth expansion for the distribution function $F_n(x)={\bf P}\{S_n\leq x\tilde{\sigma}_{n}\}$.
Here $\Phi^{(3)}(x)$ stands for the third derivative of the standard normal distribution function $\Phi(x)$, and
\begin{equation*}\label{alfa_kapa}
\alpha=\sigma_1^{-3} \Ex g_1^3(X_1), \quad \kappa=\sigma_1^{-3}\tau^2 \Ex g_2(X_1,X_2)g_1(X_1)g_1(X_2).
\end{equation*}
Note that, using explicit formulas for $g_1(\cdot)$ and $g_2(\cdot, \cdot)$ given in Theorem \ref{thm:1} below, one can evaluate 
$\alpha$, $\kappa$ and $\sigma_1$.
Define the error of the approximation by 
$$
\Delta_n :=\sup_x\left| F_n(x)-G_n(x) \right|.
$$
Given $g\colon \mathbb{R} \to \mathbb{C}$, write 
$\left\| g \right\|_{[a,b]}=\sup_{a<\left| t \right|<b} \left| g(t) \right|$.
Consider the following nonlattice condition:  
for every $\varepsilon > 0$ and every $B>0$, the function 
$\varphi(t)=\Ex \exp\{it\sigma_1^{-1}g_1(X_1)\}$ satisfies
\begin{equation}\label{sm1}
\liminf_{n_*,N \to \infty} \left\| \varphi \right\|_{[\varepsilon, B]} < 1.
\end{equation}
Consider also the Cramer-type condition
\begin{equation}\label{sm2}
\liminf_{n_*,N \to \infty} \left\| \varphi \right\|_{[\varepsilon, \tau]} < 1.
\end{equation}
Asymptotic conditions (\ref{sm1}) and (\ref{sm2}) are finite population analogues of the nonlattice and Cramer conditions familiar from the traditional case of \iid{} observations. They are imposed on the linear part of the statistic only, as well as in the case of the usual sample mean. In fact, in the \iid{} case, the practical validity of the corresponding smoothness conditions for (infinite) population is more like intuitive, and in finite population settings one can treat them similarly.

Our main result is the following statement.
\begin{thm}\label{thm:2}
Assume that $\tilde{\sigma}_{n}$ remains bounded away from zero as $n \to \infty$.  

(i) Assume that \eqref{icase} and \eqref{sm1} hold and that $\Ex \left| X_1 \right|^{3+\delta} < \infty$
for some $\delta > 0$. Then $\Delta_n=o(n_*^{-1/2})$ as $n_*,N \to \infty$. 

(ii) Assume that \eqref{iicase} and \eqref{sm2} hold and
$\Ex \left| X_1 \right|^4 < \infty$. Then $\Delta_n=O(n_*^{-1})$ as $n_*,N \to \infty$.
\end{thm}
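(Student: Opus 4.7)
The plan is to apply Theorem 2 of Bloznelis and G{\"o}tze \cite{BG_2001}, the master Edgeworth expansion for symmetric finite population statistics. That theorem produces the expansion $G_n$ with an error controlled by three ingredients: (a) moment bounds on the first three Hoeffding kernels, specifically $\Ex|g_1|^{3+\delta}=O(1)$ together with suitable decay of $\Ex g_2^2$ and $\Ex g_3^2$; (b) a bound on the $L^2$ norm of the higher-order remainder $R = U_4 + \dots + U_n$; (c) the nonlattice or Cram\'er smoothness condition on $\varphi$. Here (c) is assumed directly as \eqref{sm1} or \eqref{sm2}, so the work reduces to deducing (a) and (b) from the weight smoothness \eqref{icase}--\eqref{iicase} and the assumed moments on $X_1$.

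For (a) I would insert the explicit formulas from Theorem \ref{thm:1} into $g_1, g_2, g_3$ and apply Abel summation repeatedly to convert the weights $c_j$ into their first, second, and third differences $\Delta^v(c_j)$. The smoothness conditions \eqref{icase} then deliver pointwise bounds of the schematic form $|g_1(x)| \le C_1(|x|+M)$, $|g_2(x,y)| \le C_2 n^{-1}(|x|+|y|+M)$, and $|g_3(x,y,z)| \le C_3 n^{-2}(|x|+|y|+|z|+M)$, with $M=\Ex|X_1|$ and constants depending only on $a, b, c$. Integrating these against $\Ex|X_1|^{3+\delta}<\infty$ (respectively $\Ex X_1^4<\infty$ in part (ii)) yields $\Ex|g_1|^{3+\delta}=O(1)$, $\Ex g_2^2 = O(n^{-2})$, and $\Ex g_3^2 = O(n^{-4})$, all uniformly in $N$. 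These are exactly the decay rates at which the quantities $\alpha$ and $\kappa$ appearing in $G_n$ are of order one.

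The main obstacle is (b). Orthogonality gives $\Ex R^2 = \sum_{m\ge 4}\binom{n}{m}\Ex g_m^2$, so I need a uniform $L^2$ bound on $g_m$ for every $m\ge 4$. The natural route is to extend the representation of Theorem \ref{thm:1} inductively, using the general Bloznelis--G{\"o}tze formula expressing $g_m$ through $h_1,\dots,h_m$, and then perform iterated Abel summation so that $g_m$ becomes a weighted sum of $m$-th order differences of $c_1,\dots,c_n$. Under \eqref{iicase} the fourth and higher differences are controlled (the higher orders by telescoping), which should make $\Ex g_m^2$ decay fast enough in both $n$ and $m$ for $\sum_{m\ge 4}\binom{n}{m}\Ex g_m^2 = O(n_*^{-1})$, sufficient for part (ii). In part (i), where only third-order differences of the weights are assumed controlled, I would split $R$ into the piece covered by the third-difference bound and a residual part whose smallness has to be extracted from the extra freedom offered by \eqref{sm1} and the $(3+\delta)$-th moment of $X_1$, producing a bound of order $o(n_*^{-1/2})$.

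Once (a), (b), and (c) are in place, a direct application of Theorem 2 of \cite{BG_2001} delivers $\Delta_n = o(n_*^{-1/2})$ in part (i) and $\Delta_n = O(n_*^{-1})$ in part (ii). The technically demanding step is expected to be the uniform $L^2$ control of $g_m$ for all $m$, since each iteration in the inductive Abel summation introduces residual terms whose cumulative effect must be bounded carefully enough so that the total sum over $m$ retains the target rate.
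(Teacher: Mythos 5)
Your step (a) essentially matches the paper's argument: inserting the explicit kernels of Theorem \ref{thm:1} (where the Abel summation over the weights has already been carried out) and using Putter-style bounds via the functions $G$, $H$, $M$, one gets pointwise bounds $|g_1|\le Cn^{-1/2}(\Ex|X_1|+|x|)$, $|g_2|\le Cn^{-3/2}(\Ex|X_1|+|x|)$, $|g_3|\le Cn^{-5/2}(\Ex|X_1|+|x|)$ for the kernels of $S_n$, and hence boundedness of $\beta_{3+\delta},\gamma_{2+\delta}$ (resp.\ $\beta_4,\gamma_4,\zeta_2$) under the stated moment assumptions; your quoted rates for $\Ex g_2^2$ and $\Ex g_3^2$ are off only by normalization, which is minor.

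The genuine gap is in (b). Theorem 2 of Bloznelis and G\"otze does not require a bound on $\Ex R^2=\sum_{m\ge4}\binom{n}{m}\Ex g_m^2$; its remainder hypotheses are $\delta_3(S_n)=o(n_*^{-1/2})$ in part (i) and $\delta_4(S_n)=O(n_*^{-1})$ in part (ii), where $\delta_k=\Ex\bigl(n_*^{(k-1)}{\mathbb D}_kS_n\bigr)^2$ is built from the difference operations $D^{j}$ that exchange a sampled with an unsampled observation. Verifying these is the actual work, and your route would not accomplish it: extending Theorem \ref{thm:1} to all $m\ge4$ would require the $(m-1)$-th differences $\Delta^{m-1}(c_j)$ to be of size $n^{-(m-1)}$, whereas \eqref{icase}--\eqref{iicase} control differences only up to order three; telescoping gives merely $\max_j|\Delta^{m-1}(c_j)|\le 2^{m-4}d\,n^{-3}$, which is far too weak for $\sum_{m\ge4}\binom{n}{m}\Ex g_m^2$ to retain the target rate. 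Likewise, in part (i) the condition \eqref{sm1} concerns the characteristic function of the linear part and gives no leverage whatsoever on an $L^2$ bound for a remainder, so the proposed ``residual part'' cannot be handled that way. The paper's missing ingredient is Lemma \ref{lem:3}: ${\mathbb D}_kL_n$ equals, up to a sign, $n^{-1}\sum_{j}\Delta^{k-1}(c_j)\mathbf{\Delta}_{j:n+k}$ over a random range of ranks, so that only the differences of orders $k-1=2,3$ --- exactly those controlled by \eqref{icase} and \eqref{iicase} --- enter; a combinatorial computation of the rank and spacing probabilities then gives $\delta_k(S_n)\le C_1 n^{-1}\sum_{l<m}\lambda_{k;lm}(x_m-x_l)^2$ with $\lambda_{k;lm}=O(N^{-2})$, hence $\delta_k(S_n)=O(n_*^{-1})$ for $k=3,4$, which is precisely what Theorem BG needs. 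Without this (or an equivalent) reduction of the remainder condition to low-order weight differences, your argument cannot close.
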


Note that, conditions of Theorem \ref{thm:2} are similar to that used in the \iid{} situation,
see Helmers \cite{H_1980}, Putter \cite{P_1994} and Bentkus et \al{} \cite{BGZ_1997}.
Our proofs are purely combinatorial and differ from those used in the case of the \iid{} observations. 
In Section \ref{s:2}, we get explicit expressions for the first three terms of orthogonal decomposition. 
In Section \ref{s:3}, we prove Theorem \ref{thm:2}. The performance of the Edgeworth expansion is illustrated by a numerical example in Section \ref{s:5}. 
Proofs of lemmas are given in Section \ref{s:4}.

\section{Orthogonal decomposition}\label{s:2}

Given $0 \leq m \leq n$ and $1 \leq k_1< \dots <k_m \leq N$, introduce the event
$$
A_m=A_{x_{k_1}\cdots x_{k_m}}=\{X_1=x_{k_1},\dots ,X_m=x_{k_m}\}.
$$
For convenience of notation, we define $k_0:=0$ and $k_{m+1}:=N+1$.
Introduce numbers $x_0:=x_1$ and $x_{N+1}:=x_N$ 
and define $X_{0:n}:=x_0$ and $X_{n+1:n}:=x_{N+1}$, so that, almost surely, we have 
$X_{0:n}\leq X_{j:n}\leq X_{n+1:n}$ for each $1\leq j\leq n$.
In the proof of Theorem \ref{thm:1} below we represent order statistics by sums of sample spacings
\begin{equation}\label{spac_dec}
X_{j:n}=\sum_{r=0}^{j-1} \mathbf{\Delta}_{r:n} + x_0, \quad j=1,\dots n.
\end{equation}
Here $\mathbf{\Delta}_{r:n}=X_{r+1:n}-X_{r:n}$, $r=0,\dots ,n$ denote the sample spacings.
Write $\vartriangle_i=x_{i+1}-x_{i}$, $i=0,\dots ,N$. 

\begin{lem}\label{lem:1}
For any $m=0,\dots ,n$ and $r=0,\dots ,n$ we have
\begin{equation}\label{P6}
\Ex \left(\mathbf{\Delta}_{r:n} \,\middle|\, A_m\right)={N-m\choose n-m}^{-1}\,\sum_{s=1}^{m+1} \sum_{i=k_{s-1}}^{k_{s}-1} {i-s+1\choose r-s+1}{N-i-m+s-1\choose n-r-m+s-1}\vartriangle_i.
\end{equation}
\end{lem}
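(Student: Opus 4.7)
My approach is to rewrite each spacing $\mathbf{\Delta}_{r:n}$ as a linear combination of the population gaps $\vartriangle_i$ with indicator coefficients, and then evaluate the resulting conditional probabilities via the hypergeometric distribution on the reduced population $\mathcal{X}\setminus\{x_{k_1},\ldots,x_{k_m}\}$.

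First I would prove the pointwise identity
$$
\mathbf{\Delta}_{r:n}=\sum_{i=0}^{N}\vartriangle_i\,\mathbb{I}\{\#\{j:X_j\leq x_i\}=r\},
$$
valid for every $r=0,1,\ldots,n$ under the boundary conventions $X_{0:n}=x_0$ and $X_{n+1:n}=x_{N+1}$. The proof is a short telescoping: if $X_{r:n}$ and $X_{r+1:n}$ occupy population positions $a$ and $b$, then $X_{r+1:n}-X_{r:n}=\sum_{i=a}^{b-1}\vartriangle_i$, and the condition $a\le i<b$ is equivalent to saying that exactly $r$ of the $X_j$ lie in $\{x_1,\ldots,x_i\}$. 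Possible ties among the $x_i$ cause no trouble because $\vartriangle_i=0$ whenever $x_i=x_{i+1}$.

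Next I would take conditional expectations given $A_m$. Under $A_m$, the remaining $n-m$ sample elements form a simple random sample, drawn without replacement, from the reduced population of size $N-m$. To compute the conditional probability of the indicator event, I would partition the range $i\in\{0,1,\ldots,N\}$ into the intervals $[k_{s-1},k_s)$ for $s=1,\ldots,m+1$. For $i$ in such an interval, exactly $s-1$ of the prescribed values $x_{k_1},\ldots,x_{k_m}$ lie in $\{x_1,\ldots,x_i\}$, so the event $\{\#\{j:X_j\leq x_i\}=r\}$ reduces to the hypergeometric event of drawing exactly $r-s+1$ elements, out of the $n-m$ free draws, from a ``success'' subset of size $i-s+1$ inside a reduced population of size $N-m$. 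The standard hypergeometric formula then produces precisely the factor appearing in \eqref{P6}, and summing $\vartriangle_i$ times this factor, with $i$ grouped by $s$, delivers the identity of Lemma \ref{lem:1}.

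The hypergeometric count and the index bookkeeping are routine. The main delicate point is the opening representation of $\mathbf{\Delta}_{r:n}$: one must check that with the conventions $X_{0:n}=x_1$ and $X_{n+1:n}=x_N$ (so that $\vartriangle_0=\vartriangle_N=0$) the single identity is valid uniformly across the boundary cases $r=0$ and $r=n$, and that the indicator interpretation is unaffected by ties in the population values.
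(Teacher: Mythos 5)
Your proposal is correct, and it takes a genuinely different (and somewhat cleaner) route than the paper. The paper's own proof first computes $\Ex\left(X_{r:n}\,\middle|\,A_m\right)$ and $\Ex\left(X_{r+1:n}\,\middle|\,A_m\right)$ explicitly, i.e.\ it writes out the conditional distribution of a single order statistic given $A_m$, and then obtains $\Ex\left(\mathbf{\Delta}_{r:n}\,\middle|\,A_m\right)$ by subtraction, using Pascal-type rewritings of the binomial coefficients (the quantities $\delta_{m,s,i}^{'}(r)$, $\delta_{m,s,i}^{''}(r)$) to collapse the difference of two double sums into \eqref{P6}. You instead do the summation by parts once and for all at the level of the random variables, through the pointwise identity
\begin{equation*}
\mathbf{\Delta}_{r:n}=\sum_{i=0}^{N}\vartriangle_i\,\mathbb{I}\left\{\#\{j\le n: X_j\leq x_i\}=r\right\},
\end{equation*}
after which only one hypergeometric probability per index $i$ has to be evaluated: given $A_m$, the $n-m$ free draws form a simple random sample from the $N-m$ remaining population elements, and for $k_{s-1}\le i<k_s$ exactly $s-1$ prescribed elements lie among $\{x_1,\dots,x_i\}$, so the indicator event has conditional probability ${i-s+1\choose r-s+1}{N-i-m+s-1\choose n-r-m+s-1}\big/{N-m\choose n-m}$, which is precisely the coefficient in \eqref{P6}. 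What your route buys is the elimination of the binomial-difference bookkeeping and a transparent probabilistic meaning for each term; what it requires in exchange are exactly the two checks you flag: the boundary cases $r=0,n$ (harmless, since $\vartriangle_0=\vartriangle_N=0$ and out-of-range binomial coefficients vanish) and ties, where you should add the one-line observation that whenever $\vartriangle_i>0$, i.e.\ $x_i<x_{i+1}$, the value-based count $\#\{j:X_j\le x_i\}$ coincides with the count of sample elements having population index $\le i$, so the index-based hypergeometric computation (with the paper's convention that tied values are distinguished by their labels, which also disambiguates $A_m$) applies verbatim, while terms with $\vartriangle_i=0$ are irrelevant. With these two remarks written out, your argument is a complete proof of Lemma \ref{lem:1}.
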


Denote by $\mathcal{H}_{N,n,i}(j)={i\choose j}{N-i\choose n-j}/{N\choose n}$ the probability that 
a hypergeometric random variable with parameters $N$, $n$ and $i$ attains the value $j$.  
Denote $[N]_j=N(N-1)\cdots (N-j+1)$.
Next we give explicit and comparatively simple expressions of kernels (\ref{P4}), (\ref{P5}) and (\ref{P5_b}).

\begin{thm}\label{thm:1} 
\mbox{}

(i) For $1\leq k \leq N$ 
\begin{equation}\label{P14}
g_1(x_k)=-n^{-1}\sum_{j=1}^{n} \Delta^0(c_j) \sum_{i=1}^{N-1} \varphi_{k}(i) 
\mathcal{H}_{N-2,n-1,i-1}(j-1)\vartriangle_i,
\end{equation} 
where 
\begin{equation*}
\varphi_{k}(i)=\begin{cases}
 -i/N &\text{if $1\leq i<k$,}\\
1-i/N &\text{if $k\leq i<N$.}
\end{cases}
\end{equation*}

(ii) For $1\leq k<l \leq N$
\begin{equation}\label{P15}
g_2(x_k,x_l)=-n^{-1}\sum_{j=2}^{n} \Delta^1(c_j) \sum_{i=1}^{N-1} \phi_{k,l}(i) 
\mathcal{H}_{N-4,n-2,i-2}(j-2)\vartriangle_i,
\end{equation} 
where 
\begin{equation}\label{cof2}
\phi_{k,l}(i)=\begin{cases}
i(i-1)/[N-1]_2 &\text{if $1\leq i<k$,}\\
-(i-1)(N-i-1)/[N-1]_2 &\text{if $k\leq i<l$,}\\
(N-i-1)(N-i)/[N-1]_2 &\text{if $l\leq i<N$.}
\end{cases}
\end{equation}

(iii) For $1\leq k<l<m \leq N$
\begin{equation}\label{P15_b}
g_3(x_k,x_l,x_m)=-n^{-1}\sum_{j=3}^{n} \Delta^2(c_j) \sum_{i=1}^{N-1} \theta_{k,l,m}(i) 
\mathcal{H}_{N-6,n-3,i-3}(j-3)\vartriangle_i,
\end{equation} 
where 
\begin{equation}\label{cof3}
\theta_{k,l,m}(i)=\begin{cases}
-i(i-1)(i-2)/[N-2]_3 &\text{if $1\leq i<k$,}\\
(i-1)(i-2)(N-i-2)/[N-2]_3 &\text{if $k\leq i<l$,}\\
-(i-2)(N-i-2)(N-i-1)/[N-2]_3 &\text{if $l\leq i<m$,}\\
(N-i-2)(N-i-1)(N-i)/[N-2]_3 &\text{if $m\leq i<N$.}
\end{cases}
\end{equation}

\end{thm}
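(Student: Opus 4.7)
The plan is to combine the spacings representation behind Lemma \ref{lem:1} with explicit combinatorial identities for hypergeometric probabilities, and substitute into (\ref{P4})--(\ref{P5_b}). Specifically, rewriting $L_n$ via indicators (equivalent to (\ref{spac_dec})), one has $X_{j:n}=x_0+\sum_{i=1}^{N-1}\mathbb{I}\{N_i<j\}\vartriangle_i$ with $N_i=\#\{r:X_r\in\{x_1,\ldots,x_i\}\}\sim\mathrm{Hyp}(N,n,i)$, so interchanging summations gives
$$L_n-\Ex L_n=\frac{1}{n}\sum_{i=1}^{N-1}\vartriangle_i\,(W_i-\Ex W_i), \qquad W_i=\sum_{j=N_i+1}^n c_j.$$
Because each $W_i$ is a symmetric function of the sample, the Hoeffding kernels of $L_n$ decompose as $\vartriangle_i$-weighted sums of the Hoeffding kernels of the $W_i$, reducing the task to evaluating $P(N_i<j\mid A_m)-P(N_i<j)$ and summing against $c_j$. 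Writing $s=s(i)$ for the unique index with $k_{s-1}\leq i<k_s$, one has $N_i\mid A_m\stackrel{d}{=}(s-1)+\mathrm{Hyp}(N-m,n-m,i-s+1)$; the $m+1$ possible values of $s$ give precisely the $m+1$ cases of $\varphi_k$, $\phi_{k,l}$, $\theta_{k,l,m}$.

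The main work is hypergeometric algebra. For $g_1$, the identity
$$H_{M-1,m-1,i}(j-1)-H_{M,m,i}(j-1)=\frac{i(M-m)}{M(M-1)}\,\mathcal{H}_{M-2,m-1,i-1}(j-1)$$
(where $H$ denotes the hypergeometric CDF), proved by recognizing the left side as the probability that $N'_i=j-1$ \emph{and} $x_i\notin$ sample when sampling $m-1$ from $M-1$, collapses this CDF difference to a single PMF. Setting $(M,m)=(N,n)$ and multiplying by the factor $(N-1)/(N-n)$ from (\ref{P4}) yields (\ref{P14}).

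For $g_2$, the coefficient $(N-1)/(N-2)$ in (\ref{P5}) is precisely what forces the first-order contributions of type $H_{N-1,n-1,\cdot}-H_{N,n,\cdot}$ to cancel between $h_2(x_k,x_l)$ and $h_1(x_k)+h_1(x_l)$; the remainder reduces, after two iterations of the same type of identity, to a first difference $\mathcal{H}_{N-4,n-2,i-2}(j-1)-\mathcal{H}_{N-4,n-2,i-2}(j-2)$ in $j$. Abel summation $\sum_j c_j[f(j-1)-f(j-2)]=-\sum_j\Delta^1(c_j)\,f(j-2)$ then produces (\ref{P15}) with coefficient $-\phi_{k,l}(i)$; boundary terms vanish because $\mathcal{H}_{N-4,n-2,i-2}$ has finite support. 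The same scheme, one level deeper, handles $g_3$: the ratios in (\ref{P5_b}) force the first- and second-order pieces to cancel, leaving a second difference in $j$ of $\mathcal{H}_{N-6,n-3,i-3}$; two Abel summations yield (\ref{P15_b}).

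The main obstacle will be the combinatorial bookkeeping---no single identity is deep, but tracking how the prefactors $(N-1)/(N-2)$, $(N-2)/(N-4)$ and $(N-1)(N-2)/[(N-3)(N-4)]$ in (\ref{P5})--(\ref{P5_b}) enforce exactly the right cancellations so that only a single PMF $\mathcal{H}_{N-2m,n-m,i-m}(j-m)$ survives is delicate, especially for $m=3$.
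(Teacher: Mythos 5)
Your plan is correct and follows essentially the same route as the paper: the indicator form $X_{j:n}=x_0+\sum_{i}\mathbb{I}\{N_i<j\}\vartriangle_i$ is the spacings representation (\ref{spac_dec}) in disguise, your conditional law $N_i\mid A_m\stackrel{d}{=}(s-1)+\mathrm{Hyp}(N-m,\,n-m,\,i-s+1)$ is equivalent to the content of Lemma \ref{lem:1} (since $\Ex(\mathbf{\Delta}_{r:n}\mid A_m)=\sum_i P(N_i=r\mid A_m)\vartriangle_i$), and the concluding Abel summation over $j$ with vanishing boundary terms is exactly the paper's final step. The only difference is bookkeeping: you collapse hypergeometric CDF differences via closed-form identities after reducing to the scalar statistics $W_i$, whereas the paper performs the same cancellations per order statistic (checking $\theta_{21}=\theta_{22}$, $\theta_{31}=\theta_{32}=\theta_{33}$) and sums over $r$ by induction before Abel summation, so the delicate prefactor-driven cancellations you flag for $g_2$ and $g_3$ are precisely the verifications the paper also carries out (and largely leaves to the reader for part (iii)).
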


\begin{proof} 
\emph{(i)}
First we write a kernel of orthogonal decomposition of the order statistic $X_{j:n}$, $j=1,\ldots ,n$.
For chosen $1\leq k \leq N$, using representation (\ref{spac_dec})
and Lemma \ref{lem:1} for $m=0,1$, we have
\begin{equation*}\label{P16}
g_{1j}(x_k)={N-2\choose n-1}^{-1} \,\sum_{r=0}^{j-1} 
\left\{ \sum_{i=1}^{k-1} \frac{i}{N} \theta_{21}(i,r) \vartriangle_i -
\sum_{i=k}^{N-1} \left(1-\frac{i}{N}\right) \theta_{22}(i,r) \vartriangle_i \right\},   
\end{equation*}
where
\begin{align*}
&\theta_{21}(i,r)=\frac{N}{i}{i\choose r}\left\{ {N-i-1\choose n-r-1}-
\frac{n}{N}{N-i\choose n-r} \right\} \\
\intertext{and}
&\theta_{22}(i,r)=-\frac{N}{N-i}{N-i\choose n-r}\left\{ {i-1\choose r-1}-
\frac{n}{N}{i\choose r} \right\}.
\end{align*}
It is easy to verify that $\theta_{21}(i,r)=\theta_{22}(i,r)$.
Next, using induction it is easy to show that for
every $j=1,\dots ,n$ 
$$
\sum_{r=0}^{j-1} \theta_{22}(i,r)={i-1\choose j-1}{N-i-1\choose n-j}
$$
and the proof of part \emph{(i)} follows from a simple observation that
$$
g_1(x_k)=n^{-1}\sum_{j=1}^n c_j g_{1j}(x_k), \quad 1\leq k\leq N.
$$

\emph{(ii)} 
Similarly, for chosen $1\leq k<l \leq N$, using representation (\ref{spac_dec})
and Lemma \ref{lem:1} for $m=0,1,2$, for a single order statistic we get
\begin{equation*}\label{P17}
\begin{split}
g_{2j}(x_k,x_l)={N-4\choose n-2}^{-1} \,\sum_{r=0}^{j-1} 
&\left\{ \sum_{i=1}^{k-1} \frac{i(i-1)}{[N-1]_2} \theta_{31}(i,r) \vartriangle_i -
\sum_{i=k}^{l-1} \frac{(i-1)(N-i-1)}{[N-1]_2} \theta_{32}(i,r) \vartriangle_i \right. {}\\&+ 
\left. \sum_{i=l}^{N-1} \frac{(N-i)(N-i-1)}{[N-1]_2} \theta_{33}(i,r) \vartriangle_i \right\},   
\end{split}
\end{equation*}
where
\begin{align*}
\begin{split}
\theta_{31}(i,r)=&\frac{[N-1]_2}{i(i-1)}{i\choose r}
\left\{
{N-i-2\choose n-r-2}-
2\frac{n-1}{N-2}{N-i-1\choose n-r-1} +
\frac{n(n-1)}{[N-1]_2}{N-i\choose n-r}
\right\}
\end{split} \\
\intertext{and}
\begin{split}
\theta_{32}(i,r)=&-\frac{[N-1]_2}{(i-1)(N-i-1)}
\left[ 
{i-1\choose r-1}
\left\{ {N-i-1\choose n-r-1}-\frac{n-1}{N-2}{N-i\choose n-r} \right\} \right. {}\\& \hspace{1.5in}
-\left. \frac{n-1}{N-2}{i\choose r}
\left\{ {N-i-1\choose n-r-1}-\frac{n}{N-1}{N-i\choose n-r} \right\}
\right],
\end{split} \\
\intertext{and}
\begin{split}
\theta_{33}(i,r)=&\frac{[N-1]_2}{(N-i)(N-i-1)}{N-i\choose n-r}
\left\{
{i-2\choose r-2}-
2\frac{n-1}{N-2}{i-1\choose r-1} +
\frac{n(n-1)}{[N-1]_2}{i\choose r}
\right\}.
\end{split}
\end{align*}
Similarly $\theta_{31}(i,r)=\theta_{32}(i,r)=\theta_{33}(i,r)$.
Next, using induction one can show that, for every $j=1,\dots ,n$, 
$$
\sum_{r=0}^{j-1} \theta_{33}(i,r)=
{i-2\choose j-1}{N-i-2\choose n-j-1}-{i-2\choose j-2}{N-i-2\choose n-j}.
$$
To complete the proof of part \emph{(ii)} we observe that
$$
g_2(x_k,x_l)=n^{-1}\sum_{j=1}^n c_j g_{2j}(x_k,x_l), \quad 1\leq k<l\leq N
$$
and apply 
$$\sum_{j=1}^n c_j(b_{j+1}-b_{j})=c_n b_{n+1}-c_1 b_1-\sum_{j=2}^n (c_j-c_{j-1})b_j, \quad \text{where} \quad 
b_j={i-2\choose j-2}{N-i-2\choose n-j}.
$$

\emph{(iii)} The proof of this part is very similar.
\end{proof}

\section{Edgeworth expansion}\label{s:3}

Decomposition (\ref{P1}) is a stochastic expansion of an $L$-statistic and the first few terms of the decomposition can be quite an excellent approximation to $L_n$. In order to control the accuracy of approximation, one can use the smoothness conditions defined below.

Let $(X_1,\dots,X_N)$ denote a random permutation of the ordered set $(x_1,\dots,x_N)$ which is uniformly distributed over the class of permutations. Then, the first $n$ observations $X_1, \dots, X_n$ represent a simple random sample from population $\cal X$. For $j=1,\dots,N-n$ denote $X_{j}^{'}=X_{n+j}$.
Define 
$$
D^{j}L_n=L_n(X_1,\dots,X_n)-L_n(X_1,\dots,X_{j-1},X_{j+1},\dots,X_n,X_{j}^{'}).
$$
Higher order difference operations are defined recursively:
$$
D^{j_1,j_2}L_n=D^{j_2}(D^{j_1}L_n), \qquad D^{j_1,j_2,j_3}L_n=D^{j_3}(D^{j_2}(D^{j_1}L_n)),\dots \quad .
$$ 
They are symmetric; that is, $D^{j_1,j_2}L_n=D^{j_2,j_1}L_n$, etc. Write
$$
\delta_k=\delta_k(L_n)=\Ex \left(n_*^{(k-1)}{\mathbb D}_kL_n\right)^2, \qquad {\mathbb D}_kL_n=D^{1,2,\dots,k}L_n, \quad
1 \leq k < n_*.
$$

We shall write the differences ${\mathbb D}_kL_n$, $k=1,2,3,4$ in terms of differences of the weights $c_1,\ldots ,c_n$. 
For that purpose we need additional notation.
For $k=1,2,3,4$, denote by $X_{1:n+k}\leq \dots \leq X_{n+k:n+k}$ order statistics which correspond to the sample ${\mathbb X}_k=\{X_1,\dots ,X_{n+k}\}$. Let ${\mathbb R}_k=\{R_1,\dots ,R_{n+k}\}$ be the ranks of the sample ${\mathbb X}_k$ assigned as follows: introduce the function $f\colon{\cal X} \to \{1,\ldots ,N\}$ given by $f(x_k)=k$, $k=1, \dots, N$.
Then we decide that $R_i<R_j$ if $f(X_i)<f(X_j)$, and so on. Thus ranks ${\mathbb R}_k$ are all distinct, i.e., ${\mathbb R}_k$ is a random permutation of the set $\{1,\dots ,n+k\}$. 
Further, denote ${\mathbb R}_k^{*}=\{R_1,\dots ,R_k,R_{n+1},\dots ,R_{n+k}\}$
and let ${\mathcal R}_k$ be a set of all permutations of the set ${\mathbb R}_k^{*}$, where a particular permutation means the arrangement of elements of the set by size. Let $R_{1:2k}< \dots <R_{k:2k}<R_{n+1:2k}< \dots <R_{n+k:2k}$ denote order statistics which correspond to ${\mathbb R}_k^{*}$.  

\begin{lem}\label{lem:3}

For each $k=1,2,3,4$ there exists a random variable $d_k=d_k({\mathbb R}_k^{*})$ with values in $\{-1,0,1\}$ such that
\begin{equation*}
{\mathbb D}_k L_n=d_k n^{-1}\sum_{j=R_{k:2k}}^{R_{n+1:2k}-1} \Delta^{k-1}(c_j)\mathbf{\Delta}_{j:n+k}.
\end{equation*}  

\end{lem}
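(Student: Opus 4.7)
The plan is to avoid induction and compute $\mathbb{D}_k L_n$ directly by inclusion-exclusion, taking as the reference sample the pooled $\mathbb{X}_k=\{X_1,\ldots,X_{n+k}\}$ with order statistics $X_{1:n+k}\leq\cdots\leq X_{n+k:n+k}$. Since $L_n$ is symmetric, unfolding the definition yields
\[
\mathbb{D}_k L_n=\sum_{S\subseteq\{1,\ldots,k\}}(-1)^{|S|}L_n(X^S),
\]
where $X^S$ is the size-$n$ sample obtained from $\mathbb{X}_k$ by removing $\{X_j:j\in S\}\cup\{X_{n+j}:j\notin S\}$ — exactly one element from each pair $(X_j,X_{n+j})$, $j\leq k$. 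Writing each order statistic of $X^S$ as $X_{j_m(S):n+k}$ and telescoping $X_{j:n+k}=X_{1:n+k}+\sum_{i=1}^{j-1}\mathbf{\Delta}_{i:n+k}$ (the $X_{1:n+k}$ term dies because $\sum_S(-1)^{|S|}=0$), this rearranges to
\[
\mathbb{D}_k L_n=\frac{1}{n}\sum_{i=1}^{n+k-1}\mathbf{\Delta}_{i:n+k}\sum_{m=1}^{n}c_m f_m(i),\qquad f_m(i):=\sum_{S}(-1)^{|S|}\mathbb{I}\{j_m(S)>i\}.
\]

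The heart of the proof is the evaluation of $f_m(i)$. The event $\{j_m(S)>i\}$ says the number of removed ranks in $[1,i]$ exceeds $i-m$. For each $j\in\{1,\ldots,k\}$, classify the pair $(R_j,R_{n+j})$ by how many of its ranks lie in $[1,i]$: both ($j\in C(i)$), only $R_j$ ($j\in D(i)$), only $R_{n+j}$ ($j\in E(i)$), or neither ($j\in F(i)$). A pair in $C(i)$ contributes $1$ to the removed count irrespective of $S$, a pair in $F(i)$ contributes $0$, while the contributions from $D(i)$ and $E(i)$ depend on $\mathbb{I}\{j\in S\}$ and $\mathbb{I}\{j\notin S\}$ respectively. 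Factorising the sum over $S$ along this partition, the $S\cap C(i)$ and $S\cap F(i)$ parts yield $(1-1)^{|C(i)|}$ and $(1-1)^{|F(i)|}$, so $f_m(i)=0$ unless $C(i)=F(i)=\emptyset$. This vanishing identity is the main obstacle — the remaining steps are routine algebra.

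The condition $C(i)=F(i)=\emptyset$ forces exactly $k$ special ranks to lie in $[1,i]$, hence $R_{k:2k}\leq i\leq R_{n+1:2k}-1$, and requires $\{R_{1:2k},\ldots,R_{k:2k}\}$ to contain one rank from each pair. If this pair condition fails, $f_m(i)\equiv 0$ on the relevant range and $d_k=0$. Otherwise let $e=|E(i)|$, constant on that range; the remaining factor is the coefficient of $z^{i-m-e}$ in $(1-z)^{|D(i)|}(1-z^{-1})^{|E(i)|}=(-1)^e z^{-e}(1-z)^k$, and the partial-sum identity $\sum_{l=0}^{t}(-1)^l\binom{k}{l}=(-1)^t\binom{k-1}{t}$ gives
\[
f_m(i)=-(-1)^{e+i-m}\binom{k-1}{i-m},
\]
nonzero only for $i-m\in\{0,1,\ldots,k-1\}$. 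Substituting back and recognising $\sum_{l=0}^{k-1}(-1)^l\binom{k-1}{l}c_{i-l}=\Delta^{k-1}(c_i)$ produces the stated formula with $d_k=(-1)^{e+1}\in\{-1,+1\}$. In all cases $d_k\in\{-1,0,1\}$ and depends only on $\mathbb{R}_k^{*}$, as claimed.
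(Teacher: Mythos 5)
Your proof is correct, and it takes a genuinely different route from the paper's. The paper proceeds recursively and by case enumeration: it computes ${\mathbb D}_1L_n$ in the two cases $R_1<R_{n+1}$ and $R_1>R_{n+1}$, obtains ${\mathbb D}_2L_n$ as a difference $\mathbb D_1^{'}L_n-\mathbb D_1^{''}L_n$ by listing the possible positions of $R_2$ and $R_{n+2}$ relative to $R_1,R_{n+1}$, and for $k=3,4$ merely states that the analogous enumeration (up to $8!/4!$ arrangements of ${\mathbb R}_4^{*}$) is routine, long and cumbersome, and omits it. You instead expand ${\mathbb D}_kL_n=\sum_{S\subseteq\{1,\dots,k\}}(-1)^{|S|}L_n(X^S)$ once and for all (this is exactly the paper's starting point for $k=2$, but exploited systematically), pass to spacings, and evaluate the alternating sum $f_m(i)$ in closed form: the factorization over the partition into $C(i),D(i),E(i),F(i)$ shows $f_m(i)=0$ unless every pair has exactly one rank in $[1,i]$, which simultaneously localizes $i$ to $[R_{k:2k},R_{n+1:2k}-1]$ and identifies precisely when $d_k=0$; Vandermonde convolution and the partial-sum identity $\sum_{l=0}^{t}(-1)^l\binom{k}{l}=(-1)^t\binom{k-1}{t}$ then give $f_m(i)=-(-1)^{e+i-m}\binom{k-1}{i-m}$, whence the backward difference $\Delta^{k-1}(c_i)$ appears with sign $d_k=(-1)^{e+1}$. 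I checked your closed form against the paper's explicit $k=1$ and $k=2$ expressions (for instance $R_1<R_2<R_{n+1}<R_{n+2}$ gives $d_2=-1$, $R_{n+1}<R_2<R_1<R_{n+2}$ gives $d_2=+1$, and $R_1<R_{n+1}<R_{n+2}<R_2$ gives ${\mathbb D}_2L_n=0$), and it agrees. What your approach buys is a single computation uniform in $k$ (not limited to $k\le 4$), an explicit value of the sign and a clean criterion for the degenerate case $d_k=0$, in place of the paper's omitted case analysis for $k=3,4$; the only cost is the bookkeeping of the pair classification. One small wording slip: the quantity you need from $(1-z)^{|D(i)|}(1-z^{-1})^{|E(i)|}$ is not a single coefficient but the tail sum of the coefficients of the powers exceeding $i-m-e$; that tail sum is exactly what the partial-sum identity evaluates, and the formula you state for $f_m(i)$ is the correct one.
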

     
Consider further the normalized $L$-statistic $S_n$, defined in (\ref{Norm}).
It will be seen from the proof of Theorem \ref{thm:2} below how conditions (\ref{icase}) and (\ref{iicase}) for the weight sequence $c_1,\ldots ,c_n$ (together with an appropriate moment condition for $X_1$) correspond to the smoothness conditions
\begin{equation*}\label{SmC}
\delta_{k}(S_n)=O(n_*^{-1}) \quad \text{as} \quad n_*,N \to \infty,  
\end{equation*}
for $k=3,4$. 

For the statistic $S_n$ from Theorem \ref{thm:1} we have kernels $g_1$, $g_2$ and $g_3$ which differ from (\ref{P14}), (\ref{P15}) and (\ref{P15_b}) only by the multiplier $n^{-1/2}$. 
Write
$$
\beta_s=\Ex \left| n_*^{1/2}g_1(X_1)\right|^s, \quad 
\gamma_s=\Ex \left| n_*^{3/2}g_2(X_1,X_2)\right|^s, \quad
\zeta_s=\Ex \left| n_*^{5/2}g_3(X_1,X_2,X_3)\right|^s.
$$
Theorem \ref{thm:2} is the consequence of the following theorem.
\begin{thmBG}[Theorem 2 of Bloznelis and G{\" o}tze \cite{BG_2001}]
Assume that $\tilde{\sigma}_{n}$ remains bounded away from zero as $n \to \infty$.  

(i) Assume that \eqref{sm1} holds, $\delta_3=o(n_*^{-1/2})$ and for some $\delta > 0$, the moments $\beta_{3+\delta}$ and $\gamma_{2+\delta}$ are bounded as $n_*,N \to \infty$. Then
$\Delta_n=o(n_*^{-1/2})$ as $n_*,N \to \infty$.

(ii) Assume that \eqref{sm2} holds, $\delta_4=O(n_*^{-1})$ and, the moments $\beta_4$, $\gamma_4$, $\zeta_2$ 
are bounded as $n_*,N \to \infty$. Then
$\Delta_n=O(n_*^{-1})$ as $n_*,N \to \infty$.
\end{thmBG}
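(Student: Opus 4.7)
The plan is to follow the standard Fourier–analytic route via Esseen's smoothing inequality, adapted to sampling without replacement through the Hoeffding decomposition (\ref{P1}) and the difference operators ${\mathbb D}_k L_n$. Setting $\hat F_n(t)=\Ex\exp\{itS_n/\tilde\sigma_n\}$ and $\hat G_n(t)$ for the corresponding transform of the Edgeworth expansion, Esseen's inequality gives $\Delta_n\le c\int_{-T}^T|\hat F_n(t)-\hat G_n(t)|/|t|\,dt+c/T$ for $T$ of order $n_*^{1/2}$ in case (i) and of order $n_*$ in case (ii). The range of integration will be partitioned into small, intermediate, and large $|t|$, each needing a different argument.

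Next I would truncate the Hoeffding expansion: set $T_n=U_1+U_2$ in case (i) and $T_n=U_1+U_2+U_3$ in case (ii), with $R_n=S_n-T_n$. Using orthogonality of the $U_m$ together with the identity $\Ex({\mathbb D}_kL_n)^2=k!\,\Ex g_k^2$ and its higher–order analogues (a standard consequence of the construction of the $g_m$ in Bloznelis–G\"otze), the hypotheses $\delta_3=o(n_*^{-1/2})$ and $\delta_4=O(n_*^{-1})$ translate into $\Ex R_n^2=o(n_*^{-1})$ and $O(n_*^{-2})$ respectively. A Chebyshev step then replaces $\hat F_n$ by the characteristic function of $T_n$ up to the required error. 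For small $|t|$, a Taylor expansion of $\log\Ex\exp\{itT_n/\tilde\sigma_n\}$ around $t=0$ is carried out to the third cumulant; the combinatorial computation of $\Ex U_1^3$ contributes the $(q-p)\alpha/\tau$ term (the finite–population analogue of the classical iid $\alpha$ term, with the $(q-p)$ factor coming from the third moment of the sampling indicators), and $\Ex U_1^2U_2$ contributes $3\kappa/\tau$. The moment bounds on $\beta_s$ and $\gamma_s$ ensure remainders in the Taylor expansion are negligible.

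The delicate part is the intermediate and large $|t|$ regimes, which absorb conditions (\ref{sm1}) and (\ref{sm2}). For $\varepsilon\le|t|/\tilde\sigma_n\le B$, I would condition on all but a small number of sampled coordinates and exploit the symmetrization/randomization technique for samples without replacement (in the spirit of Bloznelis's combinatorial symmetrization or a Bolthausen–type coupling): after such conditioning, $U_1$ acts as a sum of nearly independent terms with characteristic function $\varphi(t/\tilde\sigma_n)$, and (\ref{sm1}) gives $|\hat F_n(t)|\le\|\varphi\|_{[\varepsilon,B]}^{cn}\to 0$ fast enough to yield the desired integral bound. For the very large range $B\le|t|/\tilde\sigma_n\le\tau$, needed only in case (ii), the Cramér–type assumption (\ref{sm2}) is used in the same conditioning scheme; here the range up to $\tau\asymp n_*^{1/2}$ reflects the maximal $|t|$ for which the sampling noise still provides enough ``free randomness''.

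The main obstacle I expect is the characteristic function bound in the intermediate/large range: in the iid setting one simply conditions on $n-2$ coordinates and extracts an independent pair, but for sampling without replacement every coordinate of the sample is dependent on every other, so the conditioning must be reorganized using the exchangeable structure of the random permutation $(X_1,\dots,X_N)$ and the decomposition of $S_n$ into the exact kernels given in Theorem \ref{thm:1}. Handling the contribution of the higher–order $U_m$ in this range, and quantifying the gain that the condition (\ref{sm2}) provides over (\ref{sm1}), is the technical heart of the argument and is what ultimately yields the improvement from $o(n_*^{-1/2})$ in (i) to $O(n_*^{-1})$ in (ii).
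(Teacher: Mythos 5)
This statement is not proved in the paper at all: it is quoted verbatim as Theorem~2 of Bloznelis and G\"otze \cite{BG_2001} and used as a black box, the paper's own contribution being to verify its hypotheses (the bounds on $\delta_3,\delta_4$ and on the moments $\beta_s,\gamma_s,\zeta_s$) for $L$-statistics under conditions \eqref{icase}, \eqref{iicase}, \eqref{sm1}, \eqref{sm2}. So there is no internal proof to compare yours against; the relevant question is whether your sketch would stand on its own as a proof of the cited result.

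As written it does not: it is a programme rather than a proof. The three steps that carry all the difficulty are each asserted, not established. First, the identity $\Ex({\mathbb D}_kL_n)^2=k!\,\Ex g_k^2$ that you invoke to convert the hypotheses on $\delta_3,\delta_4$ into $L^2$ bounds on the Hoeffding remainder is not correct as stated in the finite-population setting; ${\mathbb D}_kL_n$ involves all kernels $g_m$ with $m\geq k$, and what Bloznelis and G\"otze actually prove is an inequality bounding $\Ex\bigl(\sum_{m\geq k}U_m\bigr)^2$ by a multiple of $\delta_k$, which requires its own combinatorial argument. Second, the expansion of the characteristic function of $U_1+U_2$ for small $|t|$ under sampling without replacement (producing the $(q-p)\alpha$ and $3\kappa$ terms) cannot be done by a naive cumulant Taylor expansion, because the summands of $U_1$ are dependent; one needs either an explicit computation of joint moments under the hypergeometric design or an Erd\H{o}s--R\'enyi/Bolthausen-type representation, and you give neither. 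Third, and most seriously, the characteristic function bound in the range $\varepsilon\leq|t|\leq\tau$ under \eqref{sm2} --- which you yourself identify as the technical heart --- is left entirely open: the phrase ``after such conditioning, $U_1$ acts as a sum of nearly independent terms'' is precisely the claim that needs a proof, and the quantitative form of ``nearly'' is what determines whether one gets $o(n_*^{-1/2})$ or $O(n_*^{-1})$. Since the paper's intent is to cite this result rather than reprove it, the honest options are either to cite \cite{BG_2001} as the source of the proof or to supply the full argument, which is a substantial paper in itself.
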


Let us turn to the proof of our main result.

\begin{proof}[Proof of Theorem \ref{thm:2}]
Following van Zwet \cite{Z_1984} we introduce the functions
\begin{equation}\label{specf}
\begin{split}
&G(x)=\int_{-\infty}^{x} F(y)\, dy, \quad
H(x)=\int_{x}^{+\infty} (1-F(y))\, dy, \\
&M(x)=\int_{-\infty}^{x} F(y)(1-F(y))\, dy,
\end{split}
\end{equation}
where 
\begin{equation}\label{df}
F(y)=\frac{1}{N}\sum_{i=1}^N \mathbb{I}{\{x_i\leq y\}}
\end{equation}
is the distribution function of the random variable $X_1$. 

A simple integration of (\ref{df}) and some work with sums yield that at the points $x=x_k$, $k=1,\dots ,N$
we have
\begin{equation*}\label{specf_k}
\begin{split}
&G(x_k)=\sum_{i=1}^{k-1} \frac{i}{N}\vartriangle_i, \quad
H(x_k)=\sum_{i=k}^{N-1} \left(1-\frac{i}{N}\right)\vartriangle_i, \\
&M(x_k)=\sum_{i=1}^{k-1} \frac{i}{N}\left(1-\frac{i}{N}\right)\vartriangle_i.
\end{split}
\end{equation*}
Functions (\ref{specf}) are finite and monotone, and, similarly as in Putter \cite{P_1994}, we obtain for $k=1,\dots ,N$
\begin{equation}\label{ineq1}
G(x_k)+H(x_k)\leq \Ex \left| X_1 \right| + \left| x_k \right|
\end{equation}
and
\begin{equation}\label{ineq2}
M(x_N)\leq G(x_k)+H(x_k) \leq \Ex \left| X_1 \right| + \left| x_k \right|.
\end{equation}
One can show (we omit a detailed proof) that the quantities $\phi_{k,l}(i)$ and $\theta_{k,l,m}(i)$, defined by
(\ref{cof2}) and (\ref{cof3}), satisfy
\begin{equation}\label{ineq_cof2}
\lvert \phi_{k,l}(i) \rvert \leq \left( \frac{N}{N-1} \right)^2 \lvert \phi_{k,l}^{'}(i) \rvert \leq
4\lvert \phi_{k,l}^{'}(i) \rvert 
\end{equation}
and
\begin{equation}\label{ineq_cof3}
\lvert \theta_{k,l,m}(i) \rvert \leq \left( \frac{N}{N-2} \right)^3 \lvert \theta_{k,l,m}^{'}(i) \rvert \leq
27\lvert \theta_{k,l,m}^{'}(i) \rvert,
\end{equation}
where we write
$$
\phi_{k,l}^{'}(i)=\left(\mathbb{I}{\{i\geq k\}}-i/N\right)
\left(\mathbb{I}{\{i\geq l\}}-i/N\right)
$$
and
$$
\theta_{k,l,m}^{'}(i)=\left(\mathbb{I}{\{i\geq k\}}-i/N\right)
\left(\mathbb{I}{\{i\geq l\}}-i/N\right)\left(\mathbb{I}{\{i\geq m\}}-i/N\right).
$$

Then, by (\ref{icase}), (\ref{ineq1}) and because 
$\sum_{j=1}^n \mathcal{H}_{N-2,n-1,i-1}(j-1)=1$, for $1\leq k \leq N$,  
\begin{equation*}
\begin{split}
\left| g_1(x_k) \right| &\leq  
a n^{-\frac{1}{2}}\sum_{i=1}^{N-1} \lvert \varphi_{k}(i) \rvert \vartriangle_i=
a n^{-\frac{1}{2}}\left[G(x_k)+H(x_k)\right] \leq 
a n^{-\frac{1}{2}}\left[\Ex \left| X_1 \right| + \left| x_k \right| \right],
\end{split}
\end{equation*}
and thus for $s \geq 1$,
\begin{equation}\label{bet_est}
\begin{split}
\Ex \left| g_1(X_1) \right| ^s &=
\frac{1}{N} \sum_{k=1}^N \left| g_1(x_k) \right| ^s \leq  
a^s n^{-\frac{s}{2}} \frac{1}{N} \sum_{k=1}^N \left[\Ex \left| X_1 \right| + \left| x_k \right| \right]^s {}\\
&\leq a^s n^{-\frac{s}{2}} \frac{1}{N} \sum_{k=1}^N 2^{s-1} \left[\left(\Ex \left| X_1 \right| \right)^s + 
\left| x_k \right| ^s \right]
\leq 2^s a^s n^{-\frac{s}{2}} \Ex \left| X_1 \right| ^s.
\end{split}
\end{equation}

Also, by (\ref{icase}), (\ref{ineq_cof2}), (\ref{ineq1}), (\ref{ineq2}),
because of monotonicity of $G$, $H$, $M$ and 
$\sum_{j=2}^n \mathcal{H}_{N-4,n-2,i-2}(j-2)=1$, for $1\leq k<l \leq N$, 
\begin{equation*}
\begin{split}
\left| g_2(x_k,x_l) \right| &\leq  
b n^{-\frac{3}{2}}\sum_{i=1}^{N-1} \lvert \phi_{k,l}(i) \rvert \vartriangle_i \leq 
4 b n^{-\frac{3}{2}}\sum_{i=1}^{N-1} \lvert \phi_{k,l}^{'}(i) \rvert \vartriangle_i \\ 
&\leq 4 b n^{-\frac{3}{2}}\left[G(x_l)+M(x_N)+H(x_l) \right] \leq 
8 b n^{-\frac{3}{2}}\left[\Ex \left| X_1 \right| + \left| x_l \right| \right],
\end{split}
\end{equation*}
and then for $s \geq 1$,
\begin{equation}\label{gam_est}
\begin{split}
\Ex \left| g_2(X_1,X_2) \right| ^s &=
{N\choose 2}^{-1} \sum_{1\leq k<l\leq N} \left| g_2(x_k,x_l) \right| ^s \leq   
2^{3s} b^s n^{-\frac{3}{2} s} {N\choose 2}^{-1} \sum_{1\leq k<l\leq N} 
\left[\Ex \left| X_1 \right| + \left| x_l \right| \right]^s {}\\
&\leq 2^{3s} b^s n^{-\frac{3}{2} s} {N\choose 2}^{-1} \sum_{1\leq k<l\leq N}
2^{s-1} \left[\left(\Ex \left| X_1 \right| \right)^s + \left| x_l \right| ^s \right] \leq 
2^{4s+1} b^s n^{-\frac{3}{2} s} \Ex \left| X_1 \right| ^s.
\end{split}
\end{equation}

Similarly, by (\ref{icase}), (\ref{ineq_cof3}), (\ref{ineq1}), (\ref{ineq2}),
because of monotonicity of $G$, $H$, $M$ and 
$\sum_{j=3}^n \mathcal{H}_{N-6,n-3,i-3}(j-3)=1$, for $1\leq k<l<m \leq N$, 
\begin{equation*}
\begin{split}
\left| g_3(x_k,x_l,x_m) \right| &\leq  
3^4 c n^{-\frac{5}{2}}\left[\Ex \left| X_1 \right| + \left| x_m \right| \right],
\end{split}
\end{equation*}
and then for $s \geq 1$,
\begin{equation}\label{zet_est}
\begin{split}
\Ex \left| g_3(X_1,X_2,X_3) \right| ^s & \leq  
2^{s+2} 3^{4s} c^s n^{-\frac{5}{2} s} \Ex \left| X_1 \right| ^s.
\end{split}
\end{equation}

Note that, it follows from inequalities (\ref{bet_est}), (\ref{gam_est}), (\ref{zet_est}) that, for both cases \emph{(i)} and \emph{(ii)}, 
the moments $\beta_s$, $\gamma_s$ and $\zeta_s$ in Theorem BG are bounded if the corresponding moments $\Ex \left| X_1 \right| ^s$ are finite.

Next we evaluate the quantities $\delta_3(S_n)$ and $\delta_4(S_n)$.
Let us introduce the event $\mathfrak{R}_{k;ij}=\{ R_{k:2k}=i, R_{n+1:2k}=j\}$.
Application of Lemma \ref{lem:3} and the use of the corresponding smoothness conditions (\ref{icase}) and (\ref{iicase})  for $k=3,4$ yield
\begin{equation}\label{exp_con}
\begin{split}
\Ex \left[(n_*^{(k-1)}{\mathbb D}_kS_n)^2 \,\middle|\, \mathfrak{R}_{k;ij} \right] \leq 
C_1 n^{-1} \Ex \left[(X_{j:n+k}-X_{i:n+k})^2 \,\middle|\, \mathfrak{R}_{k;ij} \right],
\end{split}
\end{equation}
where we denote $C_1=\max\left\{ c^2,d^2 \right\}$.
It is a simple problem to calculate that
\begin{equation*}
p_{k;ij}=\mathbf{P}\left\{ \mathfrak{R}_{k;ij} \right\}=
{i-1\choose k-1}{n+k-j\choose k-1}\left/{n+k\choose 2k}\right. ,
\end{equation*}
where $k\leq i<j \leq n+1$. 
Since the events $\mathfrak{R}_{k;ij}$ and $\mathfrak{B}_{k;ijlm}=\left\{ X_{i:n+k}=x_l, X_{j:n+k}=x_m  \right\}$ are independent, for $x_1<\cdots <x_N$ we have
\begin{equation*}
\begin{split}
p_{k;ijlm}&=\mathbf{P}\left\{ \mathfrak{B}_{k;ijlm} \,\middle|\, \mathfrak{R}_{k;ij} \right\}=
{l-1\choose i-1}{m-l-1\choose j-i-1}{N-m\choose n+k-j}\left/{N\choose n+k}\right. ,
\end{split}
\end{equation*}
where $i \leq l<m \leq N-n-k+j$ and $m-l \geq j-i$.
For $x_1\leq \cdots \leq x_N$ these probabilities are the same. It follows from the same argument as in the proof of Lemma \ref{lem:1}.

Then, we derive from (\ref{exp_con})
\begin{equation*}\label{ncc}
\begin{split}
&\delta_k(S_n) \leq C_1 n^{-1} \sum_{1 \leq l<m \leq N} \lambda_{k;lm} (x_m-x_l)^2, \quad \text{where} \quad 
\lambda_{k;lm}=\sum_{k \leq i<j \leq n+1} p_{k;ij}p_{k;ijlm}.
\end{split}
\end{equation*}
It is easy to check that
$$
\Var X_1=\frac{1}{N^2}\sum_{1\leq l<m \leq N} (x_m-x_l)^2.
$$
Clearly, to prove the bounds $\delta_k(S_n)=O(n_*^{-1})$, $k=3,4$, it will now suffice to show that
$\lambda_{k;lm}=O(N^{-2})$ for all $1\leq l<m \leq N$.

Using the simple inequality ${u\choose v} \leq u^v/v!$ for all $k\leq i<j \leq n+1$, we obtain
\begin{equation}\label{pij}
\begin{split}
{i-1\choose k-1}{n+k-j\choose k-1} & \leq
\frac{(i-1)^{k-1}}{(k-1)!} \frac{(n+k-j)^{k-1}}{(k-1)!} \leq
\frac{[(i-1)(n+k-1-i)]^{k-1}}{[(k-1)!]^2} {}\\& =
\frac{(n+k-2)^{2(k-1)}}{[(k-1)!]^2} 
\left [\frac{i-1}{n+k-2} \left (1-\frac{i-1}{n+k-2} \right ) \right ]^{k-1} {}\\& \leq
\left(\frac{1}{4} \right)^{k-1} \frac{(n+k-2)^{2(k-1)}}{[(k-1)!]^2}. 
\end{split}
\end{equation}
Also, for all $1\leq l<m \leq N$
\begin{equation}\label{pijlm}
\begin{split}
\sum_{k\leq i<j \leq n+1} p_{k;ijlm}&=
{N\choose n+k}^{-1} \,\sum_{s=k-1}^{n-1} \sum_{t=0}^{n-1-s}
{l-1\choose s}{m-l-1\choose t}{N-m\choose n+k-2-s-t} {}\\& \leq
{N\choose n+k}^{-1} \,\sum_{s=0}^{n+k-2} \sum_{t=0}^{n+k-2-s}
{l-1\choose s}{m-l-1\choose t}{N-m\choose n+k-2-s-t} {}\\& =
{N\choose n+k}^{-1} {N-2\choose n+k-2}.
\end{split}
\end{equation}
In the last step here the generalized Vandermonde identity was applied. 

Finally, from (\ref{pij}) and (\ref{pijlm}) we derive
\begin{equation*}
\begin{split}
\lambda_{k;lm} & \leq
{n+k\choose 2k}^{-1} \left(\frac{1}{4} \right)^{k-1} \frac{(n+k-2)^{2(k-1)}}{[(k-1)!]^2}
{N\choose n+k}^{-1} {N-2\choose n+k-2} \leq  
C_k N^{-2}, 
\end{split}
\end{equation*}
for all $1\leq l<m \leq N$, where 
\begin{equation*}
C_k=2 \left(\frac{1}{4} \right)^{k-1} \frac{(2k)!}{[(k-1)!]^2}
\frac{(2k-2)^{2k-3}}{(2k-3)!},  \quad k=3,4.
\end{equation*}
Application of Theorem BG completes the proof of both cases \emph{(i)} and \emph{(ii)} of the theorem.
\end{proof}

\section{An example}\label{s:5}

We show, by the following simple example, how the Edgeworth expansion improves the usual normal approximation. A population ${\cal X}$ of size $N=100$ was simulated from the logistic distribution with the distribution function $H(x)=(1+e^{-x})^{-1}$ for $-\infty<x<+\infty$. Our chosen population ${\cal X}$ has mean $0.004$ and variance $3.270$. Consider the $L$-statistic with the weights $c_j=J(j/(n+1))$, $1\leq j\leq n$, where $J(u)=6u(1-u)$. Similarly as in the case of \iid{} observations (see Chernoff et \al{} \cite{ChGJ_1967}), the defined statistic may be applied in estimation of a center of the population.

For samples of sizes $n=5, 15, 30$, we present several $q$-quantiles of the functions $\tilde{F}_n$, $G_n$ and $\Phi$ in Table \ref{t1} below. Here $\tilde{F}_n$ is the Monte--Carlo approximation of $F_n$, obtained by drawing (independently) $10^7$ samples without replacement from ${\cal X}$.
\begin{table}[!h]
\caption{Approximations to $\tilde{F}_n$, $n=5, 15, 30$}\label{t1}
 \centering
 \tabcolsep=7pt
 \vspace{2mm}
\begin{tabular}{rrrrrrrrrrr}
\boldhline
     $q=$                        & 0.01 & 0.05 & 0.10 & 0.25 & 0.50 & 0.75 & 0.90 & 0.95 & 0.99 \\
\hline
$\tilde{F}_{5}^{-1}(q)\approx$   & -2.120 & -1.534 & -1.224 & -0.692 & -0.063 & 0.633 & 1.319 & 1.750 & 2.581 \\

$G_{5}^{-1}(q)\approx$           & -2.078 & -1.557 & -1.249 & -0.704 & -0.058 & 0.639 & 1.323 & 1.754 & 2.563 \\

$\tilde{F}_{15}^{-1}(q)\approx$  & -2.159 & -1.577 & -1.253 & -0.694 & -0.040 & 0.653 & 1.308 & 1.712 & 2.489 \\

$G_{15}^{-1}(q)\approx$          & -2.160 & -1.585 & -1.259 & -0.695 & -0.039 & 0.652 & 1.308 & 1.714 & 2.487 \\

$\tilde{F}_{30}^{-1}(q)\approx$  & -2.199 & -1.604 & -1.268 & -0.689 & -0.026 & 0.663 & 1.299 & 1.688 & 2.428 \\

$G_{30}^{-1}(q)\approx$          & -2.220 & -1.606 & -1.267 & -0.688 & -0.025 & 0.660 & 1.297 & 1.687 & 2.430 \\

$\Phi^{-1}(q)\approx$            & -2.326 & -1.645 & -1.282 & -0.674 & \phantom{-}0.000 & \phantom{-}0.674 & \phantom{-}1.282 & \phantom{-}1.645 & \phantom{-}2.326 \\
\boldhline
\end{tabular}  
\end{table}

Table \ref{t1} shows that, even for small sample of size $n=5$, the Edgeworth expansion is much more efficient than the normal approximation. Note that, in practice, the population parameters $\alpha$ and $\kappa$, which define the Edgeworth expansion $G_n$, should be estimated, see, e.g., Bloznelis \cite{B_2001}.

\section{Proofs of lemmas}\label{s:4}

\begin{proof}[Proof of Lemma \ref{lem:1}]
Assume that $x_1<\cdots <x_N$. Then, for any $m=0,\dots ,n$ and $r=0,\dots ,n+1$, straightforward combinatorial calculations give
\begin{equation*}\label{P7}
\begin{split}
\Ex \left( X_{r:n} \,\middle|\, A_m \right)=&{N-m\choose n-m}^{-1}\Bigg[\sum_{s=1}^{m+1} \sum_{i=k_{s-1}+1}^{k_{s}-1} 
{i-s\choose r-s}{N-i-m+s-1\choose n-r-m+s-1}x_{i} \\
+& \sum_{s=0}^{m+1}{k_{s}-s\choose r-s}{N-k_{s}-m+s\choose n-r-m+s} x_{k_s}\Bigg].
\end{split}
\end{equation*}
The key idea is for $r=0,\dots ,n$ to note that
\begin{equation*}\label{P8}
\begin{split}
\Ex \left( X_{r+1:n} \,\middle|\, A_m \right)=&{N-m\choose n-m}^{-1}\Bigg[\sum_{s=1}^{m+1} \sum_{i=k_{s-1}+1}^{k_{s}-1} 
{i-s\choose r-s+1}\delta_{m,s,i}^{'}(r) x_{i} \\
+& \sum_{s=0}^{m+1}{k_{s}-s\choose r-s+1}{N-k_{s}-m+s\choose n-r-m+s-1} x_{k_s}\Bigg],
\end{split}
\end{equation*}
where
$$
\delta_{m,s,i}^{'}(r)={N-i-m+s\choose n-r-m+s-1}-{N-i-m+s-1\choose n-r-m+s-1}
$$
and
\begin{equation*}\label{P9}
\begin{split}
\Ex \left( X_{r:n} \,\middle|\, A_m \right)=&{N-m\choose n-m}^{-1}\Bigg[\sum_{s=1}^{m+1} \sum_{i=k_{s-1}+1}^{k_{s}-1} 
\delta_{m,s,i}^{''}(r){N-i-m+s-1\choose n-r-m+s-1} x_{i} \\
+& \sum_{s=0}^{m+1}{k_{s}-s\choose r-s}{N-k_{s}-m+s\choose n-r-m+s} x_{k_s}\Bigg],
\end{split}
\end{equation*}
where
$$
\delta_{m,s,i}^{''}(r)={i-s+1\choose r-s+1}-{i-s\choose r-s+1}.
$$
Then, it is easy to verify that, for $r=0,\dots ,n$,
$\Ex \left( \mathbf{\Delta}_{r:n} \,\middle|\, A_m \right)$ is the same as in (\ref{P6}).
For $x_1\leq \cdots \leq x_N$ this result does not change if we assume (without loss of generality) that coincident values of ${\cal X}$ are strictly ordered by their unique names, e.g., by sizes of their indexes.
\end{proof}

\begin{proof}[Proof of Lemma \ref{lem:3}]
The set $\mathcal R_1$ contains only $2$ elements, therefore we elaborate the case $k=1$.

Let $R_1<R_{n+1}$. Then
\begin{equation*}
\begin{split}
n\mathbb D_1 L_n&=n\big(L_n(\mathbb X_1\backslash{\{X_{n+1}\}})-L_n(\mathbb X_1\backslash{\{X_1\}})\big) \\
&=\Bigg[ \sum_{j=1}^{R_{n+1}-1} c_j X_{j:n+1} + \sum_{j=R_{n+1}}^{n} c_j X_{j+1:n+1} \Bigg] 
-\Bigg[ \sum_{j=1}^{R_{1}-1} c_j X_{j:n+1} + \sum_{j=R_{1}}^{n} c_j X_{j+1:n+1} \Bigg] \\
&= -\sum_{j=R_1}^{R_{n+1}-1} c_j\mathbf{\Delta}_{j:n+1}.
\end{split}
\end{equation*}

Let $R_1>R_{n+1}$. Then
\begin{equation*}
\begin{split}
n\mathbb D_1 L_n&=n\big(L_n(\mathbb X_1\backslash{\{X_{n+1}\}})-L_n(\mathbb X_1\backslash{\{X_1\}})\big) \\
&=\Bigg[ \sum_{j=1}^{R_{n+1}-1} c_j X_{j:n+1} + \sum_{j=R_{n+1}}^{n} c_j X_{j+1:n+1} \Bigg] 
-\Bigg[ \sum_{j=1}^{R_{1}-1} c_j X_{j:n+1} + \sum_{j=R_{1}}^{n} c_j X_{j+1:n+1} \Bigg] \\
&= \sum_{j=R_{n+1}}^{R_{1}-1} c_j\mathbf{\Delta}_{j:n+1}.
\end{split}
\end{equation*}

Next we calculate $\mathbb D_k L_n$, $k=2,3,4$ recursively. Write
$\mathbb D_2 L_n=\mathbb D_1^{'} L_n-\mathbb D_1^{''} L_n$, where
\begin{align*}
\mathbb D_1^{'} L_n&=L_n(\mathbb X_2\backslash{\{X_{n+1},X_{n+2}\}})-L_n(\mathbb X_2\backslash{\{X_1,X_{n+2}\}}), \\
\mathbb D_1^{''} L_n&=L_n(\mathbb X_2\backslash{\{X_{2},X_{n+1}\}})-L_n(\mathbb X_2\backslash{\{X_1,X_2\}}).
\end{align*}
Note that both components of $\mathbb D_1^{'} L_n$ are dependent on $X_2$ and are independent of $X_{n+2}$.
For $\mathbb D_1^{''} L_n$ it is conversely. Now by constructing the set $\mathcal R_2$ from the set $\mathcal R_1$ we find the following cases. 

For $R_1<R_{n+1}$,

\begin{equation*}
n\mathbb D_1^{'} L_n=\begin{cases}
-\displaystyle\sum_{j=R_1}^{R_{n+1}-1} c_{j-1}\mathbf{\Delta}_{j:n+2} &\text{for $R_{n+2}<R_1$,}\\
-\displaystyle\sum_{j=R_1}^{R_{n+2}-1} c_j\mathbf{\Delta}_{j:n+2}
-\displaystyle\sum_{j=R_{n+2}}^{R_{n+1}-1} c_{j-1}\mathbf{\Delta}_{j:n+2} &\text{for $R_1<R_{n+2}<R_{n+1}$,}\\
-\displaystyle\sum_{j=R_1}^{R_{n+1}-1} c_j\mathbf{\Delta}_{j:n+2} &\text{for $R_{n+1}<R_{n+2}$,}
\end{cases}
\end{equation*}

\begin{equation*}
n\mathbb D_1^{''} L_n=\begin{cases}
-\displaystyle\sum_{j=R_1}^{R_{n+1}-1} c_{j-1}\mathbf{\Delta}_{j:n+2} &\text{for $R_{2}<R_1$,}\\
-\displaystyle\sum_{j=R_1}^{R_{2}-1} c_j\mathbf{\Delta}_{j:n+2}
-\displaystyle\sum_{j=R_{2}}^{R_{n+1}-1} c_{j-1}\mathbf{\Delta}_{j:n+2} &\text{for $R_1<R_{2}<R_{n+1}$,}\\
-\displaystyle\sum_{j=R_1}^{R_{n+1}-1} c_j\mathbf{\Delta}_{j:n+2} &\text{for $R_{n+1}<R_{2}$.}
\end{cases}
\end{equation*}

For $R_1>R_{n+1}$,

\begin{equation*}
n\mathbb D_1^{'} L_n=\begin{cases}
\displaystyle\sum_{j=R_{n+1}}^{R_{1}-1} c_{j-1}\mathbf{\Delta}_{j:n+2} &\text{for $R_{n+2}<R_{n+1}$,}\\
\displaystyle\sum_{j=R_{n+1}}^{R_{n+2}-1} c_j\mathbf{\Delta}_{j:n+2}
+\displaystyle\sum_{j=R_{n+2}}^{R_{1}-1} c_{j-1}\mathbf{\Delta}_{j:n+2} &\text{for $R_{n+1}<R_{n+2}<R_{1}$,}\\
\displaystyle\sum_{j=R_{n+1}}^{R_{1}-1} c_j\mathbf{\Delta}_{j:n+2} &\text{for $R_{1}<R_{n+2}$,}
\end{cases}
\end{equation*}

\begin{equation*}
n\mathbb D_1^{''} L_n=\begin{cases}
\displaystyle\sum_{j=R_{n+1}}^{R_{1}-1} c_{j-1}\mathbf{\Delta}_{j:n+2} &\text{for $R_{2}<R_{n+1}$,}\\
\displaystyle\sum_{j=R_{n+1}}^{R_{2}-1} c_j\mathbf{\Delta}_{j:n+2}
+\displaystyle\sum_{j=R_{2}}^{R_{1}-1} c_{j-1}\mathbf{\Delta}_{j:n+2} &\text{for $R_{n+1}<R_{2}<R_{1}$,}\\
\displaystyle\sum_{j=R_{n+1}}^{R_{1}-1} c_j\mathbf{\Delta}_{j:n+2} &\text{for $R_{1}<R_{2}$.}
\end{cases}
\end{equation*}
Note that, it suffices to consider only those elements of $\mathcal R_2$ for which we have $R_1>R_2$.

Similarly, write $\mathbb D_3 L_n=\mathbb D_2^{'} L_n-\mathbb D_2^{''} L_n$, where
\begin{align*}
\mathbb D_2^{'} L_n&=L_n(\mathbb X_3\backslash{\{X_{n+1},X_{n+2},X_{n+3}\}})-
                     L_n(\mathbb X_3\backslash{\{X_2,X_{n+1},X_{n+3}\}}) \\
                   &-L_n(\mathbb X_3\backslash{\{X_1,X_{n+2},X_{n+3}\}})+
                     L_n(\mathbb X_3\backslash{\{X_1,X_2,X_{n+3}\}}), \\
\mathbb D_2^{''} L_n&=L_n(\mathbb X_3\backslash{\{X_{3},X_{n+1},X_{n+2}\}})-
                      L_n(\mathbb X_3\backslash{\{X_2,X_{3},X_{n+1}\}}) \\
                    &-L_n(\mathbb X_3\backslash{\{X_1,X_{3},X_{n+2}\}})+
                      L_n(\mathbb X_3\backslash{\{X_1,X_2,X_{3}\}}).
\end{align*}
Note that all the components of $\mathbb D_2^{'} L_n$ are dependent on $X_3$ and are independent of $X_{n+3}$.
For $\mathbb D_2^{''} L_n$ it is conversely. Now, by constructing the set $\mathcal R_3$ from the set $\mathcal R_2$, similarly as in the case $k=2$, we can recursively find $\mathbb D_3 L_n$.
Without loss of generality, we consider only those elements of $\mathcal R_3$ for which $R_1>R_2>R_3$.
The calculations are routine, long and cumbersome; therefore we omit them.

The calculation of $\mathbb D_4 L_n$ is similar. Assuming that $R_1>R_2>R_3>R_4$, it suffices to consider (only) $8!/4!$ elements of the set $\mathcal R_4$.

\end{proof}

\small

\bibliographystyle{plain} 
\bibliography{x}
 
\end{document}